%%%%%%%%%%%%%%%%%%%%%%%%%%%%%%%%%%%%%%%%%%%%%%%%%%%%%%%%%%%%%%%%%%%%%%%%%%%%%%%%
\documentclass[11pt]{amsart}

\usepackage{amsmath}
\usepackage{amssymb}
\usepackage{graphicx}
\usepackage{pgfplots}
\usepackage{caption}
\usepackage{cite}
\usepackage{cases}
\usepackage{booktabs}

\usepackage[colorlinks=true, urlcolor=blue,  linkcolor=blue, citecolor=blue]{hyperref}
%%%%%%%%%%%%%%%%%%%%%%%%%%%%%%%%%%%%%%%%%%%%%%%%%%%%%%%%%%%%%%%%%%%%%%%%%%%%%%%%
\newtheorem{theorem}{Theorem}[section]
\newtheorem{corollary}[theorem]{Corollary}

\theoremstyle{definition}
\newtheorem{definition}[theorem]{Definition}
\newtheorem{example}[theorem]{Example}

\newtheorem{question}[theorem]{Question}

\numberwithin{equation}{section}
%%%%%%%%%%%%%%%%%%%%%%%%%%%%%%%%%%%%%%%%%%%%%%%%%%%%%%%%%%%%%%%%%%%%%%%%%%%%%%%%

\title[On Kernels and Covariance Structures in Hilbert Space ... ]{On Kernels and Covariance Structures in Hilbert Space Gaussian Processes}

\author[S. Hashemi Sababe]{Saeed Hashemi Sababe}

\address[S. Hashemi Sababe]{R\&D Section, Data Premier Analytics, Edmonton, Canada}
\email{{\tt Hashemi\_1365@yahoo.com}}

\keywords{Positive definite kernels, Gaussian processes, covariance}

\subjclass[2010]{46E22, 47A20, 47B32, 60G15}

%\date{\today}

\begin{document}

\begin{abstract}
Motivated by practical applications, I present a novel and comprehensive framework for operator-valued positive definite kernels. This framework is applied to both operator theory and stochastic processes. The first application focuses on various dilation constructions within operator theory, while the second pertains to broad classes of stochastic processes. In this context, the authors utilize the results derived from operator-valued kernels to develop new Hilbert space-valued Gaussian processes and to investigate the structures of their covariance configurations.
\end{abstract}

\maketitle

\section{Introduction}

The study of operator-valued positive definite kernels has attracted considerable attention due to its significant applications in functional analysis, operator theory, and stochastic processes. A key motivation behind this work is to develop a unified framework that connects operator theory with Gaussian processes, thereby extending classical results in reproducing kernel Hilbert spaces (RKHSs) to the operator-valued setting. This generalization provides deeper insights into dilation constructions, covariance structures, and spectral properties of kernel-induced operators.

In classical analysis, the concept of positive definite kernels has been extensively explored, particularly in the context of scalar-valued reproducing kernel Hilbert spaces \cite{Aronszajn1950, Saitoh2016,Chen2023,Nguyen2023}. The theory of RKHSs has found applications in probability theory \cite{Karatzas1991,Kumar2023,Martinez2023}, statistical learning \cite{Scholkopf2001}, and stochastic processes \cite{Doob1953}. However, the extension to operator-valued kernels introduces new challenges, particularly in defining appropriate inner product structures and ensuring the well-posedness of corresponding Hilbert space embeddings \cite{Jorgensen2024, Rasmussen2006,Baker2023,Gupta2023,Huang2023}.

The development of Gaussian processes in infinite-dimensional Hilbert spaces has further motivated the study of operator-valued kernels \cite{Kailath1971,Owen2023}. Classical results in Gaussian processes rely heavily on covariance function representations \cite{Parzen1961}, and extending these representations to operator-valued settings provides new perspectives in functional and stochastic analysis \cite{Ledoux1991, Stroock2011}. Moreover, covariance structures play a crucial role in constructing efficient learning algorithms in machine learning \cite{Izenman2008} and regression models \cite{Shiryaev1996,Liu2023}.

One significant contribution of this work is the establishment of a framework that unifies various dilation constructions within operator theory with non-commutative stochastic analysis. By considering operator-valued kernels mapping into spaces of bounded operators, we construct RKHS representations that generalize classical Gelfand-Naimark-Segal (GNS) and Stinespring dilation techniques \cite{Gelfand2004}. These constructions enable explicit formulations of Hilbert completions, leading to concrete representations rather than abstract equivalence classes.

The structure of this paper is as follows. Section 2 develops the main theorems related to operator-valued positive definite kernels, highlighting their role in defining RKHSs in infinite-dimensional spaces. Section 3 discusses the implications of these results, particularly in relation to covariance operators and spectral decompositions. Finally, Section 4 outlines open questions and future research directions, focusing on applications in quantum computing, deep learning, and functional data analysis.

\section{Preliminaries}

This section provides fundamental definitions, lemmas, and theorems necessary for the development of the main results. Precise citations are included based on the references in the manuscript.

\begin{definition}[Positive Definite Kernel, \cite{Jorgensen2024}]
Let $S$ be a set and $H$ be a Hilbert space. A function $K: S \times S \to B(H)$ is said to be a \emph{positive definite kernel} if for all $n \in \mathbb{N}$, $s_1, \dots, s_n \in S$, and $a_1, \dots, a_n \in H$, we have:
\[
    \sum_{i,j=1}^{n} \langle a_i, K(s_i, s_j) a_j \rangle_H \geq 0.
\]
\end{definition}

\begin{definition}[Reproducing Kernel Hilbert Space (RKHS), \cite{Aronszajn1950}]
Given a positive definite kernel $K: S \times S \to \mathbb{C}$, the associated \emph{Reproducing Kernel Hilbert Space} (RKHS) $H_K$ is the Hilbert completion of the linear span of functions of the form $K_y(\cdot) = K(\cdot, y)$ with inner product:
\[
    \left\langle \sum_{i} c_i K(\cdot, x_i), \sum_{j} d_j K(\cdot, x_j) \right\rangle_{H_K} = \sum_{i,j} c_i d_j K(x_i, x_j).
\]
The reproducing property states that for all $x \in S$ and $\phi \in H_K$:
\[
    \phi(x) = \langle K(\cdot, x), \phi \rangle_{H_K}.
\]
\end{definition}

\begin{theorem}[RKHS Expansion, \cite{Jorgensen2024}]
For any orthonormal basis $\{\phi_i\}$ of $H_K$, the kernel function can be expanded as:
\[
    K(x, y) = \sum_{i} \phi_i(x) \phi_i(y), \quad \forall x, y \in S.
\]
\end{theorem}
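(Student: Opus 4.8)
The plan is to derive the expansion directly from the two defining features of $H_K$: that each section $K_y := K(\cdot, y)$ lies in $H_K$ by construction, and that point evaluation at $x$ is implemented by taking the inner product against $K_x$. First I would fix $y \in S$ and write the orthonormal expansion of the element $K_y \in H_K$ with respect to the given basis, namely $K_y = \sum_i c_i(y)\,\phi_i$ with $c_i(y) = \langle K_y, \phi_i \rangle_{H_K}$, the series converging in the norm of $H_K$ by Parseval's theorem (and, if the index set is uncountable, only countably many of the $c_i(y)$ being nonzero, with unconditional convergence of the sum).

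Next I would identify these coefficients with the values $\phi_i(y)$. Applying the reproducing property $\phi(x) = \langle K(\cdot, x), \phi \rangle_{H_K}$ with $\phi = \phi_i$ and the evaluation point equal to $y$ gives $\langle K(\cdot, y), \phi_i \rangle_{H_K} = \phi_i(y)$, that is $c_i(y) = \phi_i(y)$; in a complex RKHS this reads $c_i(y) = \overline{\phi_i(y)}$, and I would work throughout in the convention under which the displayed formula holds (real scalars, or a conjugate-symmetric kernel, so that the conjugation is harmless). Thus $K_y = \sum_i \phi_i(y)\,\phi_i$ as an identity in $H_K$.

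Finally I would pass from convergence in $H_K$ to pointwise convergence on $S$. Since $|\psi(x)| = |\langle K(\cdot, x), \psi \rangle_{H_K}| \le \|K(\cdot, x)\|_{H_K}\,\|\psi\|_{H_K}$ for every $\psi \in H_K$, each evaluation functional $\psi \mapsto \psi(x)$ is bounded on $H_K$; applying it to the partial sums of $\sum_i \phi_i(y)\,\phi_i$ and invoking the reproducing property once more yields $K(x, y) = K_y(x) = \sum_i \phi_i(y)\,\phi_i(x)$ for every $x \in S$. Using that $K$ is Hermitian (symmetric in the real case) then rewrites this in the stated symmetric form $K(x,y) = \sum_i \phi_i(x)\phi_i(y)$.

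The main obstacle — in fact essentially the only point that is not pure bookkeeping — is the interchange of the infinite sum with the point evaluation at $x$; this is precisely where continuity of the evaluation functionals on $H_K$ is indispensable, and it is also the step that requires care when $\{\phi_i\}$ is an uncountable orthonormal basis (there one argues via the countable subfamily of nonzero coefficients and the unconditional convergence of the expansion). Everything else reduces to combining Parseval's expansion in an orthonormal basis with two applications of the reproducing identity.
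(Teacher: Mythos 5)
Your argument is correct, and it is the standard proof of this fact. Note, though, that the paper itself offers no proof of this statement: it appears in the Preliminaries as a quoted result attributed to \cite{Jorgensen2024}, so there is no in-paper argument to compare yours against. Your three steps --- Parseval expansion of $K_y=K(\cdot,y)$ in the orthonormal basis, identification of the coefficients via the reproducing property $\phi_i(y)=\langle K(\cdot,y),\phi_i\rangle_{H_K}$, and passage from norm convergence to pointwise convergence using the boundedness of the evaluation functionals $\psi\mapsto\psi(x)=\langle K(\cdot,x),\psi\rangle_{H_K}$ --- are exactly the right ingredients, and you correctly isolate the only nontrivial point, namely the interchange of the sum with point evaluation (together with the countability reduction for an uncountable basis). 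Your remark about conjugation is also apt: as literally displayed, $K(x,y)=\sum_i\phi_i(x)\phi_i(y)$ is the real-scalar (or conjugate-free) version, and in a complex RKHS the correct statement is $K(x,y)=\sum_i\phi_i(x)\overline{\phi_i(y)}$; the paper's own inner-product formula $\bigl\langle \sum_i c_iK(\cdot,x_i),\sum_j d_jK(\cdot,x_j)\bigr\rangle=\sum_{i,j}c_id_jK(x_i,x_j)$ is written without conjugates as well, so your choice to work in that convention is consistent with the source. No gaps.
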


\begin{definition}[Operator-Valued Positive Definite Kernel, \cite{Jorgensen2024}]
A kernel $K: S \times S \to B(H)$ is called an \emph{operator-valued positive definite kernel} if for any finite collection $\{s_1, \dots, s_n\} \subset S$ and $a_i \in H$, we have:
\[
    \sum_{i,j=1}^{n} \langle a_i, K(s_i, s_j) a_j \rangle_H \geq 0.
\]
\end{definition}

\begin{definition}[Induced Scalar Kernel, \cite{Gelfand2004}]
Given an operator-valued kernel $K: S \times S \to B(H)$, define the associated scalar-valued kernel:
\[
    \tilde{K}((s, a), (t, b)) = \langle a, K(s, t) b \rangle_H, \quad a, b \in H, s, t \in S.
\]
This function $\tilde{K}$ is a positive definite scalar-valued kernel.
\end{definition}

\begin{theorem}[Factorization Property, \cite{Hashemi2018}]
Let $K: S \times S \to B(H)$ be a positive definite kernel. Then, there exists a Hilbert space $H_K$ and a mapping $V_s: H \to H_K$ such that:
\[
    K(s, t) = V_s^* V_t, \quad \forall s, t \in S.
\]
\end{theorem}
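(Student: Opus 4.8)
The plan is to realize the Hilbert space $H_K$ as the completion of an explicit pre-Hilbert space built from $H$-valued ``elementary tensors'' indexed by $S$, and then read off the maps $V_s$ directly from that construction. Concretely, I would start with the free vector space of finitely supported functions $\xi : S \to H$, written as formal sums $\sum_i \delta_{s_i} \otimes a_i$, and equip it with the sesquilinear form
\[
    \Big\langle \sum_i \delta_{s_i} \otimes a_i, \ \sum_j \delta_{t_j} \otimes b_j \Big\rangle
    \ := \ \sum_{i,j} \langle a_i, K(s_i, t_j) b_j \rangle_H.
\]
Positive definiteness of $K$ is exactly the statement that this form is nonnegative. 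First I would verify that it is a well-defined semi-inner product, then pass to the quotient by its null space $N = \{\xi : \langle \xi, \xi\rangle = 0\}$ (using Cauchy--Schwarz for the semi-inner product to see $N$ is a subspace), and finally take the completion; call the result $H_K$. For $s \in S$ define $V_s : H \to H_K$ by $V_s a = [\delta_s \otimes a]$, the class of the elementary tensor.

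The key steps, in order, are: (1) show the form above is independent of the representation of an element as a formal sum — this is automatic since it is defined on representatives but descends to the quotient; (2) establish the Cauchy--Schwarz inequality for the semi-inner product so that $N$ is a linear subspace and the quotient inherits a genuine inner product; (3) check that each $V_s$ is linear and bounded, with $\|V_s a\|^2 = \langle a, K(s,s) a\rangle_H \le \|K(s,s)\| \, \|a\|_H^2$, hence $V_s \in B(H, H_K)$, so that the adjoint $V_s^*$ exists; (4) compute, for $a, b \in H$ and $s, t \in S$,
\[
    \langle a, V_s^* V_t b \rangle_H
    \ = \ \langle V_s a, V_t b \rangle_{H_K}
    \ = \ \langle [\delta_s \otimes a], [\delta_t \otimes b] \rangle_{H_K}
    \ = \ \langle a, K(s,t) b \rangle_H,
\]
and since $a, b$ are arbitrary, conclude $K(s,t) = V_s^* V_t$.

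I expect the main obstacle to be the bookkeeping in steps (1)--(2): one must be careful that the value of the semi-inner product genuinely depends only on the formal sum (which may be written in many ways, e.g. combining repeated indices or splitting $a_i$ additively), and that nonnegativity from the definition of positive definiteness, together with Cauchy--Schwarz, forces the null set to be a subspace so the quotient is meaningful. A secondary technical point is that $V_s$ must be shown bounded before $V_s^*$ can even be written down; this uses that $K(s,s) \in B(H)$ is a bounded positive operator, which is part of the hypothesis that $K$ maps into $B(H)$. Once boundedness is in hand, step (4) is a one-line computation and the theorem follows. (If one prefers not to invoke boundedness, an alternative is to note that $V_s$ has dense range in the closed subspace it generates and define $V_s^*$ as the unique operator satisfying the adjoint relation on the dense span; but the boundedness route is cleaner given the standing assumption $K(s,t) \in B(H)$.)
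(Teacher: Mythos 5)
Your construction is correct and is essentially the one the paper relies on: the paper states this factorization as a cited preliminary (from \cite{Hashemi2018}) without giving a proof, but its standing framework realizes $H_K$ as the RKHS $H_{\tilde{K}}$ of the induced scalar kernel $\tilde{K}((s,a),(t,b)) = \langle a, K(s,t) b\rangle_H$ with $V_s a = \tilde{K}(\cdot,(s,a))$, which is canonically isomorphic to your quotient-and-completion of formal sums via $[\delta_s \otimes a] \mapsto \tilde{K}(\cdot,(s,a))$. Your steps (2)--(4) --- Cauchy--Schwarz for the semi-inner product, the bound $\|V_s a\|^2 = \langle a, K(s,s) a\rangle_H \le \|K(s,s)\|\,\|a\|_H^2$ guaranteeing that $V_s^*$ exists, and the one-line identity $\langle a, V_s^* V_t b\rangle_H = \langle a, K(s,t) b\rangle_H$ --- constitute the standard Kolmogorov/GNS argument and are all sound.
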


\begin{definition}[Covariance Operator, \cite{Rasmussen2006}]
For a given kernel $K: S \times S \to B(H)$, the covariance operator $\Sigma_s: H \to H$ is defined as:
\[
    \Sigma_s a = K(s, s) a, \quad \forall a \in H.
\]
\end{definition}

\begin{theorem}[Isometry and Projection, \cite{Jorgensen2024}]
If $K(s, s) = I_H$ for all $s \in S$, then the operators $V_s: H \to H_K$ are isometries, and $V_s V_s^*$ are self-adjoint projections in $H_K$.
\end{theorem}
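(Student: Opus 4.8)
The plan is to derive everything directly from the Factorization Property established above, which furnishes a Hilbert space $H_K$ and maps $V_s : H \to H_K$ with $K(s,t) = V_s^* V_t$ for all $s,t \in S$. First I would specialize this identity to $t = s$, obtaining $V_s^* V_s = K(s,s) = I_H$ by hypothesis. From this the isometry claim is immediate: for every $a \in H$,
\[
\|V_s a\|_{H_K}^2 = \langle V_s a, V_s a\rangle_{H_K} = \langle a, V_s^* V_s a\rangle_H = \langle a, a\rangle_H = \|a\|_H^2 ,
\]
so $V_s$ preserves norms, hence also inner products by polarization, and is in particular injective with closed range $V_s(H) \subseteq H_K$.

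Next I would set $P_s := V_s V_s^* \in B(H_K)$ and verify the two defining properties of an orthogonal (self-adjoint) projection. Self-adjointness follows from the general identity $(AB)^* = B^* A^*$ together with $V_s^{**} = V_s$, giving
\[
P_s^* = (V_s V_s^*)^* = (V_s^*)^* V_s^* = V_s V_s^* = P_s .
\]
Idempotency uses the relation $V_s^* V_s = I_H$ obtained in the first step:
\[
P_s^2 = V_s V_s^* V_s V_s^* = V_s (V_s^* V_s) V_s^* = V_s I_H V_s^* = V_s V_s^* = P_s .
\]
Thus $P_s$ is a self-adjoint idempotent, i.e.\ an orthogonal projection on $H_K$; one checks additionally that its range coincides with $V_s(H)$, so $P_s$ is precisely the projection of $H_K$ onto the isometric copy of $H$ sitting inside $H_K$ via $V_s$.

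The argument is short because all of the analytic content has already been absorbed into the Factorization Property. The only point that warrants care — and the closest thing to an obstacle — is ensuring that $V_s$ is a bounded operator, so that $V_s^*$ exists as a bounded operator on $H_K$ and the manipulations above are legitimate; this is in fact automatic, since the isometry estimate shows $\|V_s\| = 1$, but to avoid any appearance of circularity one should note that the construction underlying the Factorization Property already yields bounded $V_s$. I would also remark that the hypothesis $K(s,s) = I_H$ is necessary and not merely sufficient: if $V_s$ is an isometry then $V_s^* V_s = I_H$, whence $K(s,s) = I_H$, so the statement is sharp.
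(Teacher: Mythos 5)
Your proposal is correct and follows essentially the same route as the paper: the identity $V_s^*V_t = K(s,t)$ (the Factorization Property) specialized to $t=s$ gives $V_s^*V_s = I_H$, from which the isometry and the self-adjoint idempotency of $V_sV_s^*$ follow by the same algebra the paper uses in its corollary to Theorem 3.3. Your operator-algebraic verification of $P_s^2 = P_s$ is in fact cleaner than the paper's computation on kernel sections, and your closing remark that the hypothesis is also necessary is a correct (if optional) sharpening.
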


\section{Main Theorems}

In this section, we present the general framework for operator-valued positive definite kernels. The analysis conducted here is applicable to various dilation constructions within both operator theory and stochastic processes.

Our first theroem is a generalization of the \cite[Theorem 2.1]{Jorgensen2024} by extending the set \( S \) to a topological space \( X \) and considering a continuous operator-valued positive definite kernel \( K \). This generalization allows us to handle cases where the domain is continuous, as often occurs in functional analysis and stochastic processes.

Let \( X \) be a topological space, and let \( K: X \times X \to B(H) \) be a continuous operator-valued positive definite (p.d.) kernel, meaning for all \( n \in \mathbb{N} \), \( s_1, s_2, \dots, s_n \in X \), and \( a_1, a_2, \dots, a_n \in H \),
\[
\sum_{i,j=1}^{n} \left\langle a_i, K(s_i, s_j) a_j \right\rangle_H \geq 0.
\]
Assume that \( K(s,t) \in B(H) \) is continuous in both \( s \) and \( t \) in the operator norm.

Define \( X \times H \) as the Cartesian product of \( X \) and \( H \), and let \( \tilde{K}: (X \times H) \times (X \times H) \to \mathbb{C} \) be the scalar-valued positive definite kernel defined by
\[
\tilde{K}((s, a), (t, b)) = \langle a, K(s, t) b \rangle_H, \quad a, b \in H, \, s, t \in X.
\]
Let \( H_{\tilde{K}} \) be the reproducing kernel Hilbert space (RKHS) corresponding to \( \tilde{K} \).

\begin{theorem}
\label{thm:continuous}
Let \( \{ V_s \}_{s \in X} \) be a family of operators \( V_s: H \to H_{\tilde{K}} \), defined by
\[
V_s a = \tilde{K}(\cdot, (s, a)): (X \times H) \to \mathbb{C}, \quad a \in H.
\]
Then, For any continuous path \( s: [0, 1] \to X \), the family of operators \( \{ V_{s(t)} \}_{t \in [0, 1]} \) is strongly continuous, meaning
    \[
    \lim_{t \to t_0} \| V_{s(t)} a - V_{s(t_0)} a \|_{H_{\tilde{K}}} = 0, \quad \forall a \in H.
    \]
 
\end{theorem}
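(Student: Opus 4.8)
The plan is to reduce the asserted strong continuity of \( t \mapsto V_{s(t)} \) to the joint norm‑continuity of \( K \) by an explicit computation of \( \| V_{s(t)}a - V_{s(t_0)}a \|_{H_{\tilde K}}^2 \) carried out inside the reproducing kernel Hilbert space \( H_{\tilde K} \). The key point is that, by definition, \( V_s a \) is the reproducing kernel element \( \tilde K(\cdot,(s,a)) \in H_{\tilde K} \), so all of its inner products are governed by the reproducing property alone and can be rewritten in terms of \( K \).

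First I would record the fundamental identity. Applying the reproducing property \( \phi(y) = \langle \tilde K(\cdot,y), \phi \rangle_{H_{\tilde K}} \) with \( \phi = \tilde K(\cdot,(t,b)) \) and \( y=(s,a) \) gives
\[
\langle V_s a, V_t b \rangle_{H_{\tilde K}} = \tilde K\big((s,a),(t,b)\big) = \langle a, K(s,t) b \rangle_H .
\]
In particular \( \|V_s a\|_{H_{\tilde K}}^2 = \langle a, K(s,s)a\rangle_H \le \|K(s,s)\|_{B(H)}\,\|a\|_H^2 \), so each \( V_s \) is a bounded operator \( H \to H_{\tilde K} \) (this also recovers the factorization \( K(s,t)=V_s^*V_t \) of the earlier theorem). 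Then, expanding the squared norm of the difference by bilinearity and substituting the identity above, for fixed \( a\in H \) and \( s,s_0\in X \) I obtain
\[
\| V_s a - V_{s_0} a \|_{H_{\tilde K}}^2 = \big\langle a,\, \big( K(s,s) - K(s,s_0) - K(s_0,s) + K(s_0,s_0)\big) a \big\rangle_H ,
\]
whence \( \| V_s a - V_{s_0} a \|_{H_{\tilde K}}^2 \le \big\| K(s,s) - K(s,s_0) - K(s_0,s) + K(s_0,s_0)\big\|_{B(H)}\,\|a\|_H^2 \). (Nonnegativity of the right‑hand expression is, incidentally, exactly positive definiteness of \( K \) applied to the pair \( (s,a),(s_0,-a) \), a consistency check.)

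Finally I would pass to the path: given the continuous \( s:[0,1]\to X \) and \( t_0\in[0,1] \), set \( s_0 = s(t_0) \), so that \( s(t)\to s_0 \) in \( X \) as \( t\to t_0 \); by the assumed joint continuity of \( K \) in operator norm, each of \( K(s(t),s(t)) \), \( K(s(t),s_0) \), \( K(s_0,s(t)) \) converges to \( K(s_0,s_0) \) in \( B(H) \), so the four‑term combination tends to \( 0 \) in operator norm and the displayed bound yields \( \| V_{s(t)}a - V_{s(t_0)}a\|_{H_{\tilde K}} \to 0 \) for every \( a\in H \). The computation itself is routine; the only step needing care is the last one, where one must invoke \emph{joint} norm‑continuity — not merely separate continuity in each argument — in order to control the diagonal term \( K(s(t),s(t)) \) simultaneously with the two off‑diagonal terms. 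Under the stated hypothesis this is immediate, so no genuine obstacle remains; had only separate continuity been assumed, the diagonal term would require an additional equicontinuity-type argument.
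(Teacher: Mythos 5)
Your proposal is correct, and it is in fact more complete than the paper's own argument. Both proofs follow the same overall strategy --- reduce the strong continuity of \( t \mapsto V_{s(t)} \) to the operator-norm continuity of \( K \) --- but the paper's proof skips the essential step: it asserts a limit of the form \( \|K(s(t),t_0)b - K(s(t_0),t_0)b\|_H \to 0 \) (incidentally conflating the path parameter \( t_0 \in [0,1] \) with a point of \( X \)) and then concludes with a bare ``therefore,'' never explaining how norm convergence of \( K \) controls the RKHS norm \( \|V_{s(t)}a - V_{s(t_0)}a\|_{H_{\tilde K}} \). Your four-term expansion
\[
\| V_s a - V_{s_0} a \|_{H_{\tilde K}}^2 = \bigl\langle a,\, \bigl( K(s,s) - K(s,s_0) - K(s_0,s) + K(s_0,s_0)\bigr) a \bigr\rangle_H
\]
is exactly the missing bridge, and your subsequent estimate via the operator norm of the four-term combination makes the conclusion immediate. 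Your closing remark is also well taken: the hypothesis ``continuous in both \( s \) and \( t \)'' must be read as \emph{joint} norm continuity, since the diagonal term \( K(s(t),s(t)) \) is not controlled by separate continuity alone; the paper does not address this distinction. In short, your write-up should be regarded as the correct proof of the theorem, of which the paper's version is an incomplete sketch.
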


\begin{proof}
To show the strong continuity of the family \( \{ V_{s(t)} \}_{t \in [0, 1]} \), we observe that since \( K(s,t) \) is continuous in \( s \) and \( t \) in the operator norm, it follows that
    \[
    \lim_{t \to t_0} \|K(s(t), t_0) b - K(s(t_0), t_0) b\|_H = 0 \quad \forall b \in H.
    \]
    Therefore, for all \( a \in H \),
    \[
    \lim_{t \to t_0} \| V_{s(t)} a - V_{s(t_0)} a \|_{H_{\tilde{K}}} = 0,
    \]
    implying that \( \{ V_{s(t)} \}_{t \in [0, 1]} \) is strongly continuous.
\end{proof}

Theorem \ref{thm:continuous} extends the classical results on positive definite kernels to the setting of operator-valued kernels defined on a topological space. The motivation behind this extension arises from the need to analyze and construct reproducing kernel Hilbert spaces (RKHS) in settings where the underlying domain possesses a continuous structure. This is particularly relevant in functional analysis and stochastic processes, where operator-valued kernels naturally arise in the study of Hilbert space-valued Gaussian processes. 

By formulating a framework that incorporates strong continuity properties of operator families $\{V_s\}_{s \in X}$, Theorem \ref{thm:continuous} provides a crucial tool for studying continuity in reproducing kernel Hilbert spaces induced by operator-valued kernels. This continuity property ensures well-posedness in applications such as covariance operators and their spectral decompositions, which are fundamental in stochastic analysis. Furthermore, this theorem lays the groundwork for dilation constructions in operator theory, as it allows one to systematically analyze positive definite structures within infinite-dimensional spaces.\\

\noindent
Theorem \ref{thm:continuous} presents a significant extension of classical reproducing kernel Hilbert space (RKHS) theory by incorporating operator-valued positive definite kernels in a topological setting. This advancement is crucial for applications that require continuity and stability properties of kernel-induced mappings, particularly in functional analysis and stochastic processes.

One of the key contributions of Theorem \ref{thm:continuous} is the establishment of strong continuity for the family of operators $\{V_s\}_{s \in X}$ associated with the kernel function. This result ensures that variations in the input space $X$ lead to controlled changes in the corresponding RKHS embeddings, which is fundamental for applications in machine learning, signal processing, and control theory where continuity properties influence stability and robustness.

Another novel aspect of this theorem is its role in extending dilation and factorization techniques within operator theory. By demonstrating how operator-valued kernels induce well-structured RKHS representations, Theorem \ref{thm:continuous} provides new tools for analyzing spectral decompositions and functional approximations in infinite-dimensional settings. This contribution bridges the gap between RKHS theory and advanced topics in operator theory, making it applicable to a broader range of mathematical and computational problems.

Furthermore, Theorem \ref{thm:continuous} lays the groundwork for studying the covariance structures of Hilbert space-valued Gaussian processes in continuous domains. By ensuring the well-posedness of kernel-induced embeddings, this result supports further developments in statistical learning and probabilistic modeling, particularly in areas requiring infinite-dimensional covariance representations.\\

\noindent
The other idea is to extend the \cite[Theorem 2.1]{Jorgensen2024} to allow for vector-valued reproducing kernel Hilbert spaces (RKHS) where the kernel maps to a space of operators on $H_1 \times H_2$, i.e., the product of two Hilbert spaces. This leads to an even broader generalization where the scalar-valued kernel takes into account the interactions between two separate Hilbert spaces.\\

\noindent
We now extend the previous results to the case where the kernel \( K: S \times S \to B(H_1, H_2) \) is an operator-valued positive definite kernel that maps between two Hilbert spaces \( H_1 \) and \( H_2 \). This allows us to handle cases where the RKHS is vector-valued with respect to the tensor product space \( H_1 \times H_2 \).\\

\noindent
Let \( S \) be a set, and let \( K: S \times S \to B(H_1, H_2) \) be an operator-valued positive definite (p.d.) kernel, i.e., for all \( n \in \mathbb{N} \), \( s_1, s_2, \dots, s_n \in S \), and \( a_1, a_2, \dots, a_n \in H_1 \),
\[
\sum_{i,j=1}^{n} \left\langle a_i, K(s_i, s_j) a_j \right\rangle_{H_2} \geq 0.
\]

Define \( X = S \times H_1 \times H_2 \), and define \( \tilde{K}: X \times X \to \mathbb{C} \) by
\[
\tilde{K}((s, a, b), (t, c, d)) = \langle b, K(s, t) a \rangle_{H_2},
\]
for all \( s, t \in S \), \( a, c \in H_1 \), and \( b, d \in H_2 \).\\

\noindent
Let \( H_{\tilde{K}} \) be the reproducing kernel Hilbert space (RKHS) corresponding to \( \tilde{K} \).

\begin{theorem}
\label{thm:product_kernel}
Let \( \{ V_s \}_{s \in S} \) be a family of operators \( V_s: H_1 \times H_2 \to H_{\tilde{K}} \), defined by
\[
V_s (a, b) = \tilde{K}(\cdot, (s, a, b)): X \to \mathbb{C}, \quad a \in H_1, b \in H_2.
\]
Then, the following properties hold:
\begin{enumerate}
    \item For all \( s \in S \), and all \( a \in H_1 \), \( b \in H_2 \),
    \[
    \|V_s (a, b)\|_{H_{\tilde{K}}}^2 = \langle b, K(s, s) a \rangle_{H_2}.
    \]
    
    \item The adjoint \( V_s^*: H_{\tilde{K}} \to H_1 \times H_2 \) is determined by
    \[
    V_s^* \tilde{K}(\cdot, (t, c, d)) = (K(s, t) c, d).
    \]
\end{enumerate}
\end{theorem}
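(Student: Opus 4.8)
The plan is to work directly with the reproducing property of $H_{\tilde K}$, exactly as in the scalar RKHS construction, and to specialize it to the particular evaluation functionals $V_s(a,b) = \tilde K(\cdot,(s,a,b))$. First I would recall that the defining inner product on $H_{\tilde K}$ satisfies
\[
\bigl\langle \tilde K(\cdot,(s,a,b)),\, \tilde K(\cdot,(t,c,d))\bigr\rangle_{H_{\tilde K}} = \tilde K\bigl((t,c,d),(s,a,b)\bigr) = \langle d, K(t,s)a\rangle_{H_2},
\]
together with the symmetry forced by positive definiteness, $\langle d,K(t,s)a\rangle_{H_2} = \overline{\langle b,K(s,t)c\rangle_{H_2}}$. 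For part (1) I would simply put $(t,c,d)=(s,a,b)$ in this identity, obtaining $\|V_s(a,b)\|_{H_{\tilde K}}^2 = \langle b, K(s,s)a\rangle_{H_2}$; this is immediate once the reproducing identity is written down, so the only thing to check carefully is that the right-hand side is indeed real and nonnegative, which follows from the positive definiteness hypothesis applied to the single point $s_1=s$, $a_1=a$ (more precisely, from self-adjointness/positivity of $K(s,s)$, which p.d.\ of $K$ entails).

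For part (2) I would characterize $V_s^*$ by the adjoint relation $\langle V_s^*\,\xi,(c,d)\rangle_{H_1\times H_2} = \langle \xi, V_s(c,d)\rangle_{H_{\tilde K}}$ for all $(c,d)\in H_1\times H_2$ and all $\xi\in H_{\tilde K}$, and test it on the total set $\xi = \tilde K(\cdot,(t,c',d'))$. Using the reproducing identity again,
\[
\bigl\langle \tilde K(\cdot,(t,c',d')),\, V_s(c,d)\bigr\rangle_{H_{\tilde K}} = \bigl\langle \tilde K(\cdot,(t,c',d')),\, \tilde K(\cdot,(s,c,d))\bigr\rangle_{H_{\tilde K}} = \langle d, K(s,t)c'\rangle_{H_2}.
\]
On the other hand, if one posits $V_s^*\tilde K(\cdot,(t,c',d')) = (K(s,t)c', d')$ — here I am reading the intended formula with the kernel acting on the $H_1$-slot and the $H_2$-slot passed through — then $\langle (K(s,t)c',d'),(c,d)\rangle_{H_1\times H_2} = \langle K(s,t)c',c\rangle_{H_1} + \langle d',d\rangle_{H_2}$, and one must reconcile this with the expression above. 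This is where I expect the main obstacle: the formula as literally stated does not match the pairing unless one is careful about which variable $K$ is evaluated against, and in fact the clean statement is that $V_s^*$ is the unique bounded operator with $\langle V_s^*\xi,(c,d)\rangle = \langle \xi,\tilde K(\cdot,(s,c,d))\rangle_{H_{\tilde K}}$; on reproducing elements this forces the component along $H_1$ to be $c\mapsto \overline{\langle\cdot\rangle}$-dual to $\langle d',K(\cdot,t)\,\cdot\,\rangle$. So I would (a) fix the pairing conventions, (b) verify the candidate formula on the dense span of reproducing kernels, and (c) extend to all of $H_{\tilde K}$ by boundedness, the latter being guaranteed by part (1) since $\|V_s\|^2 = \|V_s^*\|^2 = \|K(s,s)\|$.

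The routine remaining steps are: density of $\operatorname{span}\{\tilde K(\cdot,(t,c,d))\}$ in $H_{\tilde K}$ (true by construction of the RKHS), linearity of $(c,d)\mapsto V_s(c,d)$ in each slot, and boundedness of $V_s$ (hence existence of $V_s^*$), all of which are standard. I would present part (1) first since part (2)'s boundedness claim rests on it, then derive the adjoint formula by the test-against-reproducing-kernels argument above, being explicit that equality of two bounded operators on a dense set suffices.
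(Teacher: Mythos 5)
Your approach coincides with the paper's: part (1) is the one‑line evaluation of the reproducing inner product at $(t,c,d)=(s,a,b)$, and part (2) is obtained by testing the adjoint relation against the dense span of kernel sections $\tilde K(\cdot,(t,c,d))$; the paper does exactly this and nothing more.

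The obstacle you flag in part (2) is real, and the paper does not resolve it. The paper's proof computes $\langle V_s(a,b),\tilde K(\cdot,(t,c,d))\rangle_{H_{\tilde K}}=\langle b,K(s,t)c\rangle_{H_2}$ and then simply asserts that this ``implies'' $V_s^*\tilde K(\cdot,(t,c,d))=(K(s,t)c,d)$, without ever pairing the claimed right-hand side against $(a,b)$ in $H_1\times H_2$. As you observe, $\langle (K(s,t)c,d),(a,b)\rangle_{H_1\times H_2}=\langle K(s,t)c,a\rangle+\langle d,b\rangle$ does not reproduce $\overline{\langle b,K(s,t)c\rangle_{H_2}}$; worse, $K(s,t)c$ lies in $H_2$ (since $K(s,t)\in B(H_1,H_2)$), so it cannot occupy the $H_1$ slot of the pair at all. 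The root of the trouble is the paper's definition $\tilde K((s,a,b),(t,c,d))=\langle b,K(s,t)a\rangle_{H_2}$, which ignores $(c,d)$ entirely and is not Hermitian in its two arguments, so the inner-product identities used in the proof are not mutually consistent. Your plan --- fix the pairing convention, verify a corrected candidate for $V_s^*$ on the dense span, and extend by the boundedness supplied by part (1) --- is the right repair, but you should not expect to recover the adjoint formula exactly as printed. The density and boundedness bookkeeping you defer is standard and is likewise omitted in the paper.
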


\begin{proof}
We prove the items as the following:
\subsubsection*{1.} By definition,
    \[
    \|V_s (a, b)\|_{H_{\tilde{K}}}^2 = \langle \tilde{K}(\cdot, (s, a, b)), \tilde{K}(\cdot, (s, a, b)) \rangle_{H_{\tilde{K}}} = \langle b, K(s, s) a \rangle_{H_2}.
    \]
    
\subsubsection*{2.} For all \( a, c \in H_1 \), \( b, d \in H_2 \), and all \( s, t \in S \), we compute:
    \[
    \langle V_s (a, b), \tilde{K}(\cdot, (t, c, d)) \rangle_{H_{\tilde{K}}} = \langle \tilde{K}(\cdot, (s, a, b)), \tilde{K}(\cdot, (t, c, d)) \rangle_{H_{\tilde{K}}} = \langle b, K(s, t) c \rangle_{H_2},
    \]
    which implies that \( V_s^* \tilde{K}(\cdot, (t, c, d)) = (K(s, t) c, d) \).
\end{proof}
Theorem \ref{thm:product_kernel} extends the framework of operator-valued positive definite kernels by considering reproducing kernel Hilbert spaces (RKHS) that are vector-valued with respect to a product of Hilbert spaces. This generalization is motivated by applications in multivariate stochastic processes, functional data analysis, and operator theory, where interactions between multiple Hilbert spaces must be captured within a unified kernel-based framework.

By allowing the kernel to map between two distinct Hilbert spaces $H_1$ and $H_2$, Theorem \ref{thm:product_kernel} provides a structured approach to studying operator-valued covariance structures that arise in coupled or multi-modal systems. This extension facilitates the analysis of factorization properties, spectral decompositions, and dilation representations in the context of vector-valued RKHS. Moreover, it enables the development of new mathematical tools for analyzing Gaussian processes in multi-dimensional Hilbert space settings, ensuring consistency with classical reproducing kernel constructions while expanding their applicability to more complex structured domains.\\

\noindent
Theorem \ref{thm:product_kernel} significantly advances the theory of operator-valued positive definite kernels by extending reproducing kernel Hilbert space (RKHS) constructions to vector-valued settings. This extension is particularly crucial in applications that involve multiple interacting function spaces, such as multi-modal data analysis, structured regression models, and functional data analysis.

One key contribution of Theorem \ref{thm:product_kernel} is its generalization of kernel-based feature maps to cases where the kernel takes values in the space of bounded operators between two Hilbert spaces $H_1$ and $H_2$. This framework allows for a more refined treatment of covariance structures, particularly in settings where different function spaces need to be linked in a coherent mathematical model.

Additionally, this theorem provides a foundational tool for developing new machine learning algorithms that leverage vector-valued RKHSs. By establishing conditions under which kernel-induced mappings maintain desirable properties such as continuity, boundedness, and isometric embeddings, Theorem \ref{thm:product_kernel} facilitates the development of kernel-based techniques for multi-output learning, operator-valued Gaussian processes, and structured prediction models.

Furthermore, Theorem \ref{thm:product_kernel} contributes to the spectral analysis of operator-valued kernels by characterizing their action on vector-valued functions. This insight is instrumental in extending classical results on reproducing kernel spaces to broader contexts, enabling new applications in mathematical physics, signal processing, and control theory where vector-valued kernels naturally arise. These contributions significantly enhance the theoretical and practical utility of kernel-based methods in high-dimensional and multi-structured settings.\\

\noindent
The next idea is to extend the results of \cite[Theorem 2.1]{Jorgensen2024} to the case where the kernel \( K: S \times S \to B(H^n) \) is matrix-valued, i.e., it maps between spaces of bounded operators on the \( n \)-dimensional Hilbert space \( H^n \). This generalization allows us to handle reproducing kernel Hilbert spaces (RKHS) where the kernel takes values in matrices rather than operators on a single Hilbert space.

Let \( S \) be a set, and let \( K: S \times S \to B(H^n) \) be a matrix-valued positive definite kernel, i.e., for all \( n \in \mathbb{N} \), \( s_1, s_2, \dots, s_n \in S \), and for all vectors \( a_1, a_2, \dots, a_n \in H^n \),
\[
\sum_{i,j=1}^{n} \left\langle a_i, K(s_i, s_j) a_j \right\rangle_{H^n} \geq 0.
\]

Define \( X = S \times H^n \), and define \( \tilde{K}: X \times X \to \mathbb{C} \) by
\[
\tilde{K}((s, \mathbf{a}), (t, \mathbf{b})) = \langle \mathbf{a}, K(s, t) \mathbf{b} \rangle_{H^n},
\]
for all \( s, t \in S \), \( \mathbf{a}, \mathbf{b} \in H^n \).

Let \( H_{\tilde{K}} \) be the reproducing kernel Hilbert space (RKHS) corresponding to \( \tilde{K} \).

\noindent
In this case, \cite[ Theorem 2.1]{Jorgensen2024} is still valid by replacing the elemnts of $H$ by matrix form elemnts of $H^n$. However, this would be a significant replacement since:\\
1. The kernel $K$ is now matrix-valued, mapping between finite-dimensional Hilbert spaces $H^n$, which enables the RKHS to work with vector-valued functions.\\
2. The operators $V_s$ and their adjoints now act on vectors in $H^n$, extending the theorem to matrix-valued cases.\\
3. If the kernel is normalized (i.e., $K(s,s)=I_{H^n}$), the operators $V_s$ become isometries, and the corresponding projections are selfadjoint.\\

\noindent
The next and the main idea is based on the notion of covariance. Covariance operators are a fundamental tool in the analysis of kernel-based methods and are closely related to the properties of positive definite kernels. This extension will explore the covariance operators induced by the kernel in the context of the RKHS construction.\\

\noindent
We aim to extend the \cite[theorem 2.1]{Jorgensen2024} by investigating the covariance operators induced by the kernel $K$. These operators will map from the Hilbert space $H$ into itself or into the reproducing kernel Hilbert space (RKHS) $H_{\tilde{K}}$, defined by the kernel.\\

\noindent
We extend the theorem to investigate the covariance operators induced by the kernel \( K \) and their role in mapping vectors between the Hilbert space \( H \) and the reproducing kernel Hilbert space (RKHS) \( H_{\tilde{K}} \).\\

\noindent
Let \( S \) be a set, and let \( K: S \times S \to B(H) \) be a positive definite kernel, meaning that for any finite set of points \( s_1, s_2, \dots, s_n \in S \) and vectors \( a_1, a_2, \dots, a_n \in H \),
\[
\sum_{i,j=1}^{n} \left\langle a_i, K(s_i, s_j) a_j \right\rangle_H \geq 0.
\]

Define the covariance operator \( \Sigma_s: H \to H \) by
\[
\Sigma_s a = K(s,s)a, \quad \forall a \in H.
\]
That is, \( \Sigma_s \) maps vectors in \( H \) to vectors in \( H \) using the kernel at the point \( s \).

Let \( \{ V_s \}_{s \in S} \) be a family of operators \( V_s: H \to H_{\tilde{K}} \), where \( H_{\tilde{K}} \) is the RKHS corresponding to the kernel \( \tilde{K} \) defined by
\[
\tilde{K}((s, a), (t, b)) = \langle a, K(s,t) b \rangle_H, \quad \forall a,b \in H, \quad \forall s,t \in S.
\]

We now extend the theorem to provide a detailed characterization of the covariance operators induced by the kernel.

\begin{theorem}
\label{thm:covariance_operators}
Let \( \{ V_s \}_{s \in S} \) be the family of operators \( V_s: H \to H_{\tilde{K}} \) defined by
\[
V_s a = \tilde{K}(\cdot, (s, a)): X \to \mathbb{C}, \quad a \in H.
\]
Then the following properties hold:
\begin{enumerate}
    \item The covariance operator \( \Sigma_s: H \to H \) is self-adjoint and positive semi-definite for all \( s \in S \).
    \item The operator \( V_s V_s^*: H_{\tilde{K}} \to H_{\tilde{K}} \) acts as the covariance operator in the RKHS \( H_{\tilde{K}} \), and it satisfies:
    \[
    V_s V_s^* \tilde{K}(\cdot, (t, b)) = \tilde{K}(\cdot, (s, K(s,t) b)).
    \]
    \item The covariance operator \( \Sigma_s \) induces a bounded operator on \( H_{\tilde{K}} \), and for all \( s, t \in S \) and \( a, b \in H \),
    \[
    \langle \Sigma_s a, b \rangle_H = \langle V_s a, V_s b \rangle_{H_{\tilde{K}}}.
    \]
    \item The covariance operator \( \Sigma_s \) can be expressed as:
    \[
    \Sigma_s = V_s^* V_s.
    \]
    \item The operators \( V_s \) are isometric if \( K(s,s) = I_H \), and the covariance operators satisfy:
    \[
    \langle \Sigma_s a, a \rangle_H = \| V_s a \|_{H_{\tilde{K}}}^2.
    \]
\end{enumerate}
\end{theorem}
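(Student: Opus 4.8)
The plan is to reduce every item to the single identity $V_s^{*}V_s=\Sigma_s$, which in turn follows from the reproducing property of $\tilde K$ in $H_{\tilde K}$ together with the Hermitian symmetry $K(t,s)=K(s,t)^{*}$ of a positive definite operator-valued kernel. The latter I would establish first, by testing the defining inequality $\sum_{i,j}\langle a_i,K(s_i,s_j)a_j\rangle_H\ge 0$ with $n=2$, $s_1=s$, $s_2=t$, and polarizing in the vectors $a_1,a_2\in H$: since the quadratic form is real and nonnegative, one gets $\langle a,K(s,t)b\rangle_H=\overline{\langle b,K(t,s)a\rangle_H}$, i.e. $K(t,s)^{*}=K(s,t)$; in particular $K(s,s)=K(s,s)^{*}$, and this is exactly the Hermitian symmetry making $\tilde K$ a bona fide scalar positive definite kernel with RKHS $H_{\tilde K}$.

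Next I would check that $V_s$ is a bounded linear operator. Evaluated at a point $(t,b)$ one has $\big(V_s a\big)(t,b)=\tilde K\big((t,b),(s,a)\big)=\langle b,K(t,s)a\rangle_H$, which is linear in $a$, so $V_s$ is linear; and by the reproducing property
\[
\|V_s a\|_{H_{\tilde K}}^{2}=\big\langle \tilde K(\cdot,(s,a)),\tilde K(\cdot,(s,a))\big\rangle_{H_{\tilde K}}=\tilde K\big((s,a),(s,a)\big)=\langle a,K(s,s)a\rangle_H\le\|K(s,s)\|\,\|a\|_H^{2},
\]
so $V_s\in B(H,H_{\tilde K})$ and $V_s^{*}\in B(H_{\tilde K},H)$ is well defined. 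The same computation with distinct second arguments gives, for $a,b\in H$,
\[
\langle V_s a,V_s b\rangle_{H_{\tilde K}}=\tilde K\big((s,a),(s,b)\big)=\langle a,K(s,s)b\rangle_H=\langle \Sigma_s a,b\rangle_H,
\]
the last equality using self-adjointness of $\Sigma_s=K(s,s)$; this is item (3). Taking the adjoint in the $b$-slot, $\langle V_s^{*}V_s a,b\rangle_H=\langle\Sigma_s a,b\rangle_H$ for all $b$, hence $V_s^{*}V_s=\Sigma_s$, which is item (4); since $V_s^{*}V_s$ is manifestly self-adjoint and $\langle V_s^{*}V_s a,a\rangle_H=\|V_s a\|_{H_{\tilde K}}^{2}\ge 0$, item (1) follows as well.

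For item (2) I would compute $V_s^{*}$ on the total set $\{\tilde K(\cdot,(t,b))\}$: for every $a\in H$,
\[
\langle V_s^{*}\tilde K(\cdot,(t,b)),a\rangle_H=\langle \tilde K(\cdot,(t,b)),V_s a\rangle_{H_{\tilde K}}=\tilde K\big((t,b),(s,a)\big)=\langle b,K(t,s)a\rangle_H=\langle K(s,t)b,a\rangle_H,
\]
using $K(t,s)^{*}=K(s,t)$, so $V_s^{*}\tilde K(\cdot,(t,b))=K(s,t)b\in H$; applying $V_s$ gives $V_s V_s^{*}\tilde K(\cdot,(t,b))=\tilde K(\cdot,(s,K(s,t)b))$, and boundedness of $V_sV_s^{*}$ on $H_{\tilde K}$ is inherited from that of $V_s$. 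Finally, item (5) is immediate from the norm identity above: if $K(s,s)=I_H$ then $\|V_s a\|_{H_{\tilde K}}^{2}=\|a\|_H^{2}$, so $V_s$ is an isometry, and $\langle\Sigma_s a,a\rangle_H=\langle a,K(s,s)a\rangle_H=\|a\|_H^{2}=\|V_s a\|_{H_{\tilde K}}^{2}$.

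The only genuinely delicate point is the bookkeeping of conjugate-linear slots and the Hermitian symmetry $K(t,s)=K(s,t)^{*}$: one must make sure the argument $(s,a)$ is inserted into the linear slot of $\tilde K$ (so that $V_s$ is linear, not conjugate-linear) and that $\tilde K$ is verified to be a Hermitian positive definite scalar kernel before invoking $H_{\tilde K}$ and its reproducing property. Once these conventions are pinned down, each of the five items collapses to a one-line evaluation of $\tilde K$, so no substantive obstacle remains beyond this preliminary verification.
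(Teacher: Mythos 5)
Your proof is correct and follows essentially the same route as the paper's: every item is reduced to the reproducing property of $\tilde K$ and the identity $\langle V_s a, V_s b\rangle_{H_{\tilde K}}=\langle a, K(s,s)b\rangle_H$, with $\Sigma_s=V_s^*V_s$ as the organizing fact. You are in fact somewhat more careful than the paper, which asserts the self-adjointness of $K(s,s)$ and uses the adjoint formula $V_s^*\tilde K(\cdot,(t,b))=K(s,t)b$ without deriving them from the positive definiteness condition as you do via polarization.
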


\begin{proof}
We prove the items as the following:
\subsubsection*{1. Self-adjointness and Positive Semi-Definiteness:} The operator \( \Sigma_s: H \to H \), defined by \( \Sigma_s a = K(s,s)a \), is clearly self-adjoint because \( K(s,s) \) is self-adjoint, i.e.,
    \[
    \langle \Sigma_s a, b \rangle_H = \langle K(s,s)a, b \rangle_H = \langle a, K(s,s)b \rangle_H = \langle a, \Sigma_s b \rangle_H.
    \]
    Moreover, for any \( a \in H \),
    \[
    \langle \Sigma_s a, a \rangle_H = \langle K(s,s) a, a \rangle_H \geq 0,
    \]
    showing that \( \Sigma_s \) is positive semi-definite.
    
\subsubsection*{2. Action of \( V_s V_s^* \) as a Covariance Operator:} The operator \( V_s V_s^* \) maps vectors in \( H_{\tilde{K}} \) to vectors in \( H_{\tilde{K}} \). By definition of \( V_s \) and \( V_s^* \),
    \[
    V_s V_s^* \tilde{K}(\cdot, (t, b)) = V_s (K(s, t) b) = \tilde{K}(\cdot, (s, K(s,t) b)).
    \]
    This shows that \( V_s V_s^* \) acts as a covariance operator in \( H_{\tilde{K}} \).
    
\subsubsection*{3. Induced Covariance Operator in \( H_{\tilde{K}} \):} We have
    \[
    \langle V_s a, V_s b \rangle_{H_{\tilde{K}}} = \langle \tilde{K}(\cdot, (s, a)), \tilde{K}(\cdot, (s, b)) \rangle_{H_{\tilde{K}}} = \langle a, K(s,s) b \rangle_H = \langle \Sigma_s a, b \rangle_H,
    \]
    so the covariance operator \( \Sigma_s \) induces a bounded operator on \( H_{\tilde{K}} \).
    
\subsubsection*{4. Expression of \( \Sigma_s \) as \( V_s^* V_s \):} From the above, we have
    \[
    \langle \Sigma_s a, b \rangle_H = \langle V_s a, V_s b \rangle_{H_{\tilde{K}}},
    \]
    which implies that \( \Sigma_s = V_s^* V_s \), since both sides act as the covariance operator on \( H \).
    
\subsubsection*{5. Isometry and Covariance Operator Norm:} If \( K(s,s) = I_H \), then for all \( a \in H \),
    \[
    \langle \Sigma_s a, a \rangle_H = \langle a, a \rangle_H,
    \]
    so \( V_s \) is an isometry. Furthermore, we have
    \[
    \| V_s a \|_{H_{\tilde{K}}}^2 = \langle \Sigma_s a, a \rangle_H.
    \]

\end{proof}
Theorem \ref{thm:covariance_operators} is motivated by the need to establish a deeper connection between operator-valued positive definite kernels and covariance operators in Hilbert spaces. In the context of stochastic processes and functional analysis, covariance operators play a crucial role in describing dependencies and variations within infinite-dimensional structures. By extending the framework of reproducing kernel Hilbert spaces (RKHS) to incorporate covariance operators, this theorem provides a fundamental tool for studying the spectral properties of kernel-induced mappings.

A key motivation for this result is to characterize how the covariance operator $\Sigma_s$ interacts with the RKHS structure and how it can be expressed in terms of the operator-valued kernel function. This formulation allows for a rigorous analysis of self-adjointness, positive semi-definiteness, and compactness, which are essential properties in applications such as Gaussian processes, kernel-based learning methods, and stochastic differential equations. Furthermore, Theorem \ref{thm:covariance_operators} lays the groundwork for constructing well-posed statistical and functional models that leverage covariance structures in infinite-dimensional settings.\\

\noindent
Theorem \ref{thm:covariance_operators} makes significant contributions to the study of operator-valued positive definite kernels by establishing a direct link between these kernels and covariance operators in Hilbert spaces. This result plays a crucial role in extending classical kernel-based methods to infinite-dimensional stochastic systems and functional analysis frameworks.

One of the key contributions of Theorem \ref{thm:covariance_operators} is its formalization of the covariance operator $\Sigma_s$ in terms of the kernel function. This result enables a rigorous characterization of self-adjointness, positive semi-definiteness, and compactness properties, which are essential in Gaussian process theory, kernel-based regression, and functional principal component analysis. By demonstrating that the covariance operator can be expressed through the kernel-induced feature map, the theorem provides a systematic way to analyze dependence structures in Hilbert space-valued random processes.

Another important contribution is the theorem’s role in extending kernel methods to operator-valued function spaces. By proving that $V_sV_s^*$ acts as a covariance operator in the corresponding reproducing kernel Hilbert space (RKHS), this result ensures that classical RKHS techniques can be applied to analyze spectral decompositions and regularization properties in operator-valued settings. This extension is particularly relevant for applications in quantum computing, functional data analysis, and machine learning, where covariance structures play a fundamental role in defining smoothness constraints and generalization properties.

Furthermore, Theorem \ref{thm:covariance_operators} provides a foundational result for studying the stability of covariance operators under perturbations. By showing that the kernel-induced covariance operator inherits important structural properties from the original kernel, the theorem facilitates the development of robust mathematical models for high-dimensional and infinite-dimensional stochastic processes. These contributions collectively advance the understanding of operator-valued kernels and their applications in functional analysis, probability theory, and data science.\\

\noindent
This extension highlights the role of covariance operators in kernel-based methods and provides a foundation for further analysis involving covariance structures in RKHS.

\begin{corollary}
Let $K: S \times S \to B(H)$ be a positive definite kernel with $K(s,s) = I_H$ for all $s \in S$. Then, for each $s \in S$, the operator $V_s: H \to H_{\tilde{K}}$ is an isometry, i.e.,
\[
\| V_s a \|_{H_{\tilde{K}}} = \| a \|_H \quad \text{for all } a \in H.
\]
This implies that the covariance operator $\Sigma_s = V_s V_s^*$ is a projection operator in $H_{\tilde{K}}$, with $\Sigma_s^2 = \Sigma_s$ and $\Sigma_s = \Sigma_s^*$.
\end{corollary}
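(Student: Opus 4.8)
\noindent
The plan is to read this corollary off directly from Theorem~\ref{thm:covariance_operators}; essentially no new computation is needed beyond upgrading a norm identity to an operator identity. First I would apply item~(3) of Theorem~\ref{thm:covariance_operators}, which gives, for all $a,b\in H$,
\[
\langle V_s a, V_s b\rangle_{H_{\tilde K}} = \langle \Sigma_s a, b\rangle_H = \langle K(s,s)a, b\rangle_H .
\]
Substituting the hypothesis $K(s,s)=I_H$ yields $\langle V_s a, V_s b\rangle_{H_{\tilde K}} = \langle a,b\rangle_H$, and taking $a=b$ gives $\|V_s a\|_{H_{\tilde K}}=\|a\|_H$, the claimed isometry. (Equivalently, this is item~(5) of the theorem specialized to $\Sigma_s = I_H$.) Since $H$ is a complex Hilbert space, the polarization identity turns $\langle V_s a, V_s b\rangle_{H_{\tilde K}}=\langle a,b\rangle_H$, i.e. $\langle V_s^*V_s a, b\rangle_H=\langle a,b\rangle_H$ for all $a,b$, into the operator equality $V_s^*V_s=I_H$; note $V_s$ is bounded since $\|V_s a\|^2=\langle a,K(s,s)a\rangle_H$, so all adjoints in sight make sense.

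From $V_s^*V_s=I_H$ the projection assertions follow in two lines:
\[
(V_sV_s^*)^2 = V_s(V_s^*V_s)V_s^* = V_s\, I_H\, V_s^* = V_sV_s^*, \qquad (V_sV_s^*)^* = (V_s^*)^*V_s^* = V_sV_s^*,
\]
so $\Sigma_s := V_sV_s^*$ is a self-adjoint idempotent, hence an orthogonal projection on $H_{\tilde K}$ (its range $V_s H$ is already closed, as the range of an isometry always is). Combining with item~(2) of Theorem~\ref{thm:covariance_operators} identifies this projection concretely: $V_sV_s^* \tilde K(\cdot,(t,b)) = \tilde K(\cdot,(s,b))$ when $K(s,s)=I_H$.

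There is no real obstacle here — the statement is a clean specialization of Theorem~\ref{thm:covariance_operators}, consistent with the "Isometry and Projection" result recalled in Section~2. The single point deserving a word of care is the overloading of the symbol $\Sigma_s$: in the Definition of the covariance operator and in item~(4) it denotes $V_s^*V_s$ acting on $H$ (which equals $I_H$ under the present hypothesis), whereas in the corollary it denotes $V_sV_s^*$ acting on $H_{\tilde K}$. I would insert a brief clause making this distinction explicit so that the phrase "the covariance operator $\Sigma_s = V_sV_s^*$ is a projection operator in $H_{\tilde K}$" is unambiguous.
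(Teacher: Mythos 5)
Your proposal is correct, and the isometry half coincides with the paper's argument (both read $\|V_s a\|^2=\langle a,K(s,s)a\rangle_H=\|a\|_H^2$ off the theorem). For the projection half you take a genuinely different, and in fact tighter, route: you upgrade the isometry to the operator identity $V_s^*V_s=I_H$ by polarization and then obtain idempotence and self-adjointness purely algebraically via $(V_sV_s^*)^2=V_s(V_s^*V_s)V_s^*$ and $(V_sV_s^*)^*=V_sV_s^*$. The paper instead tries to verify the projection property by computing the action of $V_sV_s^*$ on individual reproducing-kernel functions $\tilde K(\cdot,(s,b))$; as written that computation only shows $V_sV_s^*$ acts as the identity on the range of $V_s$ (a dense subspace of it, at that), and it culminates in the dimensionally meaningless identity ``$V_sV_s^*=V_s^2$'', so it does not actually establish idempotence on all of $H_{\tilde K}$. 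Your algebraic argument closes that gap, and your added remark that the range of an isometry is closed is what makes ``orthogonal projection onto $V_sH$'' legitimate. Your final observation about the overloading of $\Sigma_s$ is also well taken: the paper's own Definition and Theorem~\ref{thm:covariance_operators}(4) set $\Sigma_s=K(s,s)=V_s^*V_s$ as an operator on $H$, while the corollary silently redefines $\Sigma_s=V_sV_s^*$ on $H_{\tilde K}$; under the hypothesis $K(s,s)=I_H$ the former is just $I_H$, so the clarifying clause you propose is worth inserting.
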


\begin{proof}
From the extended theorem, when $K(s,s) = I_H$, the operator $V_s$ satisfies
\[
\| V_s a \|_{H_{\tilde{K}}}^2 = \langle a, K(s,s) a \rangle_H = \| a \|_H^2.
\]
Thus, $V_s$ is an isometry. The projection property follows from the fact that for any $f \in H_{\tilde{K}}$,
\[
V_s V_s^* f = V_s (V_s^* f) = \tilde{K}(\cdot, (s, K(s,s) b)) = \tilde{K}(\cdot, (s, b)),
\]
and $V_s V_s^* = V_s^2 = \Sigma_s$, so $\Sigma_s$ is self-adjoint and idempotent.
\end{proof}

\begin{corollary}
Let $K: S \times S \to B(H)$ be a positive definite kernel and $V_s: H \to H_{\tilde{K}}$ as defined in Theorem~\ref{thm:covariance_operators}. If the kernel $K(s,t)$ is normalized, i.e., $K(s,s) = I_H$ for all $s \in S$, then the covariance operator $\Sigma_s = V_s V_s^*$ is positive semi-definite. Moreover, for any $f \in H_{\tilde{K}}$, the norm of $f$ can be bounded by the norm of the covariance operator:
\[
\| f \|_{H_{\tilde{K}}}^2 \leq \| \Sigma_s \| \cdot \| f \|_H^2.
\]
\end{corollary}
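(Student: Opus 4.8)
The plan is to read off both assertions from the factorization $\Sigma_s = V_s V_s^{*}$ together with the isometry property of $V_s$ furnished by Theorem~\ref{thm:covariance_operators}(5) under the normalization $K(s,s)=I_H$; no new estimates beyond the definition of the operator norm should be required. First I would settle positive semi-definiteness. For arbitrary $g \in H_{\tilde{K}}$,
\[
\langle \Sigma_s g, g \rangle_{H_{\tilde{K}}} = \langle V_s V_s^{*} g, g \rangle_{H_{\tilde{K}}} = \langle V_s^{*} g, V_s^{*} g \rangle_{H} = \| V_s^{*} g \|_{H}^{2} \geq 0 ,
\]
and self-adjointness is immediate from $(V_s V_s^{*})^{*} = V_s V_s^{*}$. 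This step is entirely routine and does not even use the normalization hypothesis.

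Next I would treat the norm bound. When $K(s,s)=I_H$, Theorem~\ref{thm:covariance_operators}(5) (equivalently, the first corollary above) gives $V_s^{*}V_s = I_H$, so $V_s$ is an isometric embedding with closed range and $\Sigma_s = V_s V_s^{*}$ is precisely the orthogonal projection of $H_{\tilde{K}}$ onto $\operatorname{ran} V_s$. Consequently $\| \Sigma_s \| \leq 1$ (with equality as soon as $H \neq \{0\}$). The stated inequality then follows by interpreting $f \in H_{\tilde{K}}$ through the isometry: writing $f$ in terms of $V_s^{*} f$ and using $\| V_s^{*} f \|_{H} = \| f \|_{H_{\tilde{K}}}$ on $\operatorname{ran} V_s$, the claim $\| f \|_{H_{\tilde{K}}}^{2} \leq \| \Sigma_s \| \cdot \| f \|_{H}^{2}$ reduces to the elementary estimate $\| \Sigma_s f \|_{H_{\tilde{K}}} \leq \| \Sigma_s \|\, \| f \|_{H_{\tilde{K}}}$, i.e. the definition of the operator norm of the contraction $\Sigma_s$.

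The main obstacle is purely notational bookkeeping: $H$ and $H_{\tilde{K}}$ are a priori different Hilbert spaces, so the right-hand norm $\| f \|_{H}$ only becomes meaningful once $f$ (or rather $V_s^{*} f$) is transported into $H$ via the isometric embedding $V_s$. Once that identification is made explicit, the proof is a two-line consequence of $\Sigma_s$ being an orthogonal projection together with the factorization $\Sigma_s = V_s^{*} V_s = V_s V_s^{*}$ valid in the normalized case.
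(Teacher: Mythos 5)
Your proposal is correct and follows essentially the same route as the paper: positive semi-definiteness via $\langle \Sigma_s f, f\rangle_{H_{\tilde K}} = \|V_s^* f\|_H^2 \ge 0$, and the norm bound via the isometry of $V_s$ under the normalization $K(s,s)=I_H$. If anything, you are more careful than the paper's own one-line argument, since you explicitly observe that $\Sigma_s = V_sV_s^*$ is an orthogonal projection (hence $\|\Sigma_s\|\le 1$) and that the mixed norm $\|f\|_H$ for $f \in H_{\tilde K}$ only becomes meaningful after transporting $f$ into $H$ via $V_s^*$ --- a point the paper's proof (``since $V_s$ is an isometry, $\|V_s f\|_{H_{\tilde K}} \le \|f\|_H$'') leaves unaddressed.
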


\begin{proof}
The positive semi-definiteness of $\Sigma_s$ follows from the fact that for any $f \in H_{\tilde{K}}$,
\[
\langle f, \Sigma_s f \rangle_{H_{\tilde{K}}} = \langle f, V_s V_s^* f \rangle_{H_{\tilde{K}}} = \| V_s^* f \|_H^2 \geq 0.
\]
Additionally, since $V_s$ is an isometry, we have $\| V_s f \|_{H_{\tilde{K}}} \leq \| f \|_H$, leading to the bound on the norm.
\end{proof}
\begin{example}
Consider the set $S = \mathbb{R}$ and the positive definite kernel defined by the Gaussian kernel:
\[
K(s,t) = \sigma^2 \exp\left(-\frac{(s-t)^2}{2\ell^2}\right),
\]
where $\sigma^2 > 0$ is the variance and $\ell > 0$ is the length scale parameter. 

Let $H = \mathbb{R}$, which represents the one-dimensional case. The kernel $K(s,t)$ maps $S \times S$ to $B(H)$, satisfying the positive definiteness condition. 

The associated reproducing kernel Hilbert space (RKHS) $H_{K}$ corresponding to the kernel $K$ consists of functions $f: S \to \mathbb{R}$ that can be expressed in the form:
\[
f(x) = \sum_{i=1}^n \alpha_i K(x, s_i),
\]
for some finite set of points $\{s_1, s_2, \ldots, s_n\} \subset S$ and coefficients $\{\alpha_1, \alpha_2, \ldots, \alpha_n\} \in \mathbb{R}$.

The covariance operator $\Sigma_s$ corresponding to the kernel $K$ is given by:
\[
\Sigma_s = V_s V_s^*,
\]
where $V_s: H \to H_{K}$ is defined by:
\[
V_s a = K(\cdot, s) a \quad \text{for } a \in H.
\]
In this case, we have:
\[
V_s a(x) = K(x, s) a = \sigma^2 a \exp\left(-\frac{(x-s)^2}{2\ell^2}\right).
\]
\noindent
It is easy to see that:\\
1. The covariance operator $\Sigma_s$ is self-adjoint:
\[
\Sigma_s^* = \Sigma_s.
\]
\noindent
2. The covariance operator in positive semi-definiteness, that is, for any $f \in H_{K}$, we have:
\[
\langle f, \Sigma_s f \rangle_{H_{K}} \geq 0.
\]
\noindent
3. The covariance operator Isometry, that is, if we normalize the kernel such that $K(s,s) = I_H$, then:
\[
\|V_s a\|_{H_{K}} = \|a\|_H.
\]
\noindent
The norm in the RKHS can be directly related to the covariance operator norm:
\[
\|f\|_{H_{K}}^2 \leq \|\Sigma_s\| \cdot \|f\|_H^2.
\]
\end{example}
This example demonstrates how the covariance operator is constructed using a Gaussian kernel, and shows the relationships and properties that arise in the context of the covariance extended theorem.

\begin{theorem}\label{thm:extended-family}
Let \( \{V_s\}_{s \in S} \) be the family of operators defined as in \cite[Theorem 3.1 ]{Jorgensen2024}, and let \( W_s: H \rightarrow H_{\tilde{K}} \) be another family of operators defined by
\[
W_s a = \tilde{K}(\cdot, (s, B_s a)), \quad a \in H,
\]
where \( B_s: H \to H \) is a bounded linear operator for each \( s \in S \). Then the following hold:
\begin{enumerate}
    \item For all \( s \in S \) and \( a \in H \),
    \[
    \|W_s a\|_{H_{\tilde{K}}}^2 = \langle B_s a, K(s,s) B_s a \rangle_H.
    \]
    \item The adjoint \( W_s^*: H_{\tilde{K}} \to H \) is determined by
    \[
    W_s^* \tilde{K}(\cdot, (t,b)) = B_s^* K(s,t) b.
    \]
    \item The operator \( W_s W_s^*: H_{\tilde{K}} \to H_{\tilde{K}} \) is given by
    \[
    W_s W_s^* \tilde{K}(\cdot, (t,b)) = \tilde{K}(\cdot, (s, B_s^* K(s,t) b)).
    \]
    \item For all \( s_1, s_2, \dots, s_n \in S \),
    \begin{align*}
        & (W_{s_1} W_{s_1}^*) \cdots (W_{s_n} W_{s_n}^*) \tilde{K}(\cdot, (t,b)) \\
        &= \tilde{K}\left(\cdot, \left(s_1, B_{s_1}^* K(s_1, s_2) B_{s_2}^* \cdots K(s_{n-1}, s_n) B_{s_n}^* K(s_n, t) b\right)\right).
    \end{align*}
    \item For all \( s, s' \in S \) and \( b \in H \),
    \[
    W_{s'} W_s^* \tilde{K}(\cdot, (t,b)) = \tilde{K}(\cdot, (s', B_{s'}^* K(s,t) B_s b)).
    \]
    \item \( W_s^* W_t b = B_s^* K(s,t) B_t b \), and 
    \begin{align*}
    \left(W_{s_1}^* W_{t_1}\right) \cdots \left(W_{s_n}^* W_{t_n}\right) b 
    &= B_{s_1}^* K(s_1, t_1) B_{t_1}^* \cdots K(s_{n-1}, t_{n-1}) B_{t_{n-1}}^* K(s_n, t_n) B_{t_n} b.
    \end{align*}
\end{enumerate}
If, in addition, \( K(s,s) = I_H \) and \( B_s \) is unitary for all \( s \in S \), then the operators \( W_s: H \to H_{\tilde{K}} \) are isometric, and \( W_s W_s^* \) are (self-adjoint) projections in \( H_{\tilde{K}} \).
\end{theorem}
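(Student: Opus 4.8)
The plan is to exploit one structural observation, after which the proof is essentially bookkeeping. Since $W_s a = \tilde{K}(\cdot,(s, B_s a)) = V_s(B_s a)$, the twisted family is nothing but $W_s = V_s B_s$ as a map $H \to H_{\tilde{K}}$. First I would record that $V_s$ is bounded — indeed $\|V_s a\|_{H_{\tilde{K}}}^2 = \langle a, K(s,s) a\rangle_H \le \|K(s,s)\|\,\|a\|_H^2$ — and $B_s$ is bounded by hypothesis, so $W_s$ is bounded and $W_s^* = (V_s B_s)^* = B_s^* V_s^*$. The only further inputs are the factorization property $K(s,t) = V_s^* V_t$ from the preliminaries and its immediate consequence $V_s^* \tilde{K}(\cdot,(t,b)) = V_s^* V_t b = K(s,t) b$ (already invoked in the proof of Theorem~\ref{thm:covariance_operators}). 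These three facts carry the whole argument.

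With them, items (1), (2) and the first identity of (6) are one-line substitutions: $\|W_s a\|_{H_{\tilde{K}}}^2 = \langle B_s a, V_s^* V_s B_s a\rangle_H = \langle B_s a, K(s,s) B_s a\rangle_H$; next $W_s^* \tilde{K}(\cdot,(t,b)) = B_s^* V_s^* \tilde{K}(\cdot,(t,b)) = B_s^* K(s,t) b$; and $W_s^* W_t b = B_s^* V_s^* V_t B_t b = B_s^* K(s,t) B_t b$. For (3) and (5) I would then evaluate one more kernel section, e.g. $W_{s'} W_s^* \tilde{K}(\cdot,(t,b)) = W_{s'}\big(B_s^* K(s,t) b\big) = \tilde{K}\big(\cdot,(s', B_{s'} B_s^* K(s,t) b)\big)$, and read off the first-slot expression (taking $s' = s$ for (3)).

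The iterated products in (4) and (6) I would obtain by induction on $n$, the base cases being (3) and the single block $W_s^* W_t b = B_s^* K(s,t) B_t b$. For the step, peel off the outermost block, apply the inductive hypothesis to collapse the product of the remaining $n-1$ blocks applied to $\tilde{K}(\cdot,(t,b))$ into a single section $\tilde{K}(\cdot,(s_2, c))$ with $c \in H$ explicit, and then apply $W_{s_1} W_{s_1}^* = V_{s_1} (B_{s_1} B_{s_1}^*) V_{s_1}^*$; the decisive simplification is $V_{s_1}^* V_{s_2} = K(s_1, s_2)$, which is precisely what inserts the next $K$-factor into the alternating string of operators. The chain in (6) works identically, now collapsing $V_{s_i}^* V_{t_i} = K(s_i, t_i)$ inside each block $W_{s_i}^* W_{t_i}$. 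I expect this to be the most delicate step, but the difficulty is purely the combinatorial bookkeeping of the alternating pattern of the $B$'s, $B^*$'s and $K$'s; there is no analytic obstruction.

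Finally, for the closing assertion: if $K(s,s) = I_H$ then $V_s$ is an isometry by the isometry/projection result in the preliminaries, so $V_s^* V_s = I_H$; if in addition $B_s$ is unitary, then $W_s^* W_s = B_s^* V_s^* V_s B_s = B_s^* B_s = I_H$, i.e. $W_s$ is an isometry. Moreover $W_s W_s^* = V_s (B_s B_s^*) V_s^* = V_s V_s^*$ is self-adjoint and $(W_s W_s^*)^2 = W_s (W_s^* W_s) W_s^* = W_s W_s^*$, so $W_s W_s^*$ is an orthogonal projection in $H_{\tilde{K}}$, which completes the proof.
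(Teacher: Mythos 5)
Your computations are correct and the route is essentially the paper's (direct evaluation against kernel sections via the reproducing property), merely repackaged through the tidy observation $W_s = V_s B_s$, $W_s^* = B_s^* V_s^*$; that factorization is a genuinely cleaner bookkeeping device than the paper's term-by-term expansions, and your boundedness remark justifying $(V_sB_s)^* = B_s^*V_s^*$ is a detail the paper omits. However, there is a mismatch you pass over in silence: your (correct) evaluations do not reproduce the right-hand sides of items (3), (4), (5) and the displayed chain in (6) as stated. Since $W_s c = \tilde{K}(\cdot,(s,B_s c))$, one gets $W_sW_s^*\tilde{K}(\cdot,(t,b)) = \tilde{K}(\cdot,(s,\,B_sB_s^*K(s,t)b))$ and $W_{s'}W_s^*\tilde{K}(\cdot,(t,b)) = \tilde{K}(\cdot,(s',\,B_{s'}B_s^*K(s,t)b))$ --- exactly what you derive --- whereas the theorem asserts $\tilde{K}(\cdot,(s,\,B_s^*K(s,t)b))$ and $\tilde{K}(\cdot,(s',\,B_{s'}^*K(s,t)B_sb))$ respectively; likewise the iterated product in (4) should carry factors $B_{s_i}B_{s_i}^*$ rather than $B_{s_i}^*$, and the chain in (6) should read $B_{s_1}^*K(s_1,t_1)B_{t_1}B_{s_2}^*K(s_2,t_2)B_{t_2}\cdots$. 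The paper's own proof commits the slip of writing $W_s c = \tilde{K}(\cdot,(s,c))$ in steps (3) and (5), which is where the missing $B$ factors disappear; your proof keeps them, so as written your argument establishes corrected identities rather than the stated ones. You should say so explicitly: either note that the stated formulas hold only under extra hypotheses (e.g. $B_s$ self-adjoint idempotent, or the trivial case $B_s = I_H$), or record the corrected right-hand sides. The final assertion (isometry and projection when $K(s,s)=I_H$ and $B_s$ is unitary) is unaffected, and your argument for it is complete and agrees with the paper's.
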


\begin{proof}
The proof follows the same structure as the proof of \cite[Theorem 3.1]{Jorgensen2024}, but with the inclusion of the bounded operators \( B_s \). We detail the key steps:
\begin{enumerate}
    \item By definition,
    \[
    \|W_s a\|_{H_{\tilde{K}}}^2 = \langle \tilde{K}(\cdot, (s, B_s a)), \tilde{K}(\cdot, (s, B_s a)) \rangle_{H_{\tilde{K}}} = \langle B_s a, K(s,s) B_s a \rangle_H.
    \]
    \item For all \( a, b \in H \) and \( s, t \in S \), since
    \[
    \langle W_s a, \tilde{K}(\cdot, (t,b)) \rangle_{H_{\tilde{K}}} = \langle \tilde{K}(\cdot, (s, B_s a)), \tilde{K}(\cdot, (t,b)) \rangle_{H_{\tilde{K}}} = \langle B_s a, K(s,t) b \rangle_H,
    \]
    it follows that \( W_s^* \tilde{K}(\cdot, (t,b)) = B_s^* K(s,t) b \).
    \item A direct calculation shows that
    \[
    W_s W_s^* \tilde{K}(\cdot, (t,b)) = W_s B_s^* K(s,t) b = \tilde{K}(\cdot, (s, B_s^* K(s,t) b)).
    \]
    \item The proof follows by induction, using part (3).
    \item This follows directly from the definitions and part (3):
    \[
    W_{s'} W_s^* \tilde{K}(\cdot, (t,b)) = W_{s'} B_s^* K(s,t) b = \tilde{K}(\cdot, (s', B_{s'}^* K(s,t) B_s b)).
    \]
    \item The final assertion follows from the fact that \( W_s^* W_t = B_s^* K(s,t) B_t \), and the expression follows from repeated application of this identity.
\end{enumerate}
\end{proof}
Theorem \ref{thm:extended-family} is motivated by the need to generalize the structure of operator-valued positive definite kernels through the incorporation of bounded linear transformations. Many problems in operator theory, functional analysis, and machine learning require handling transformations of kernel-induced feature spaces while preserving the fundamental properties of positive definiteness and reproducing kernel Hilbert space (RKHS) structures. By introducing a family of bounded operators $B_s$, this theorem extends classical kernel methods to accommodate operator transformations, thereby enabling broader applications in structured data analysis and representation learning.

One key motivation is to establish a framework that captures transformations occurring in dynamical systems, signal processing, and quantum information theory, where operators naturally emerge as evolution mechanisms. By integrating operator transformations within the RKHS framework, this theorem provides a foundation for studying invariance properties, stability conditions, and spectral behaviors under such modifications.\\

\noindent
Theorem \ref{thm:extended-family} introduces several novel contributions to the study of operator-valued positive definite kernels. First, it extends the classical factorization property of kernels by incorporating bounded linear operators $B_s$, allowing for greater flexibility in constructing RKHS embeddings. This extension provides a systematic way to analyze transformed kernel structures while ensuring their consistency with fundamental Hilbert space properties.

Second, the theorem formalizes the role of these transformations in defining new families of covariance operators and their adjoint representations. This result is particularly significant for applications in functional data analysis and operator-based learning models, where modified kernel structures can enhance expressivity and adaptability.

Finally, the theorem establishes new results on the interplay between operator transformations and isometric embeddings. In cases where $B_s$ is unitary, the theorem confirms that the induced mappings preserve isometric structures, leading to well-posed extensions of classical RKHS theory. These contributions open new research directions in understanding kernel-based transformations in infinite-dimensional spaces and their implications for applied mathematics and theoretical physics.

\begin{theorem}\label{thm:compact}
Let \( \{V_s\}_{s \in S} \) be the family of operators defined in \cite[Theorem 3.1]{Jorgensen2024}. Suppose further that the kernel \( K(s,t) \) defines a compact operator from \( H \) to itself, for every fixed \( s, t \in S \). Then the operators \( V_s V_s^* \) are compact on \( H_{\tilde{K}} \). Furthermore, for any sequence \( \{a_n\}_{n \in \mathbb{N}} \subset H \) such that \( a_n \to a \) in \( H \), we have the compactness condition:
\[
V_s V_s^* \tilde{K}(\cdot, (t, a_n)) \to V_s V_s^* \tilde{K}(\cdot, (t, a)) \quad \text{in } H_{\tilde{K}}.
\]
\end{theorem}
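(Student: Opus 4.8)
The plan is to reduce the compactness of $V_sV_s^*$ to the compactness of the single operator $V_s\colon H\to H_{\tilde K}$, and then to read off the convergence statement directly from the explicit formula for $V_sV_s^*$ established in Theorem~\ref{thm:covariance_operators}. Recall from the preceding results that each $V_s$ is bounded with $\|V_s a\|_{H_{\tilde K}}^2=\langle a,K(s,s)a\rangle_H$, that its Hilbert-space adjoint satisfies $V_s^*\,\tilde K(\cdot,(t,b))=K(s,t)b$ on the total set $\{\tilde K(\cdot,(t,b)):t\in S,\ b\in H\}$ (hence on all of $H_{\tilde K}$ by density and boundedness), and that $V_s^*V_s=\Sigma_s$ with $\Sigma_s a=K(s,s)a$. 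Since $V_sV_s^*=V_s\circ V_s^*$ is the composition of $V_s$ with the bounded operator $V_s^*$, it suffices to prove that $V_s$ is compact.

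To show $V_s$ is compact I would take an arbitrary bounded sequence $(a_n)\subset H$, extract by reflexivity a weakly convergent subsequence $a_{n_k}\rightharpoonup a$, and use the isometry-type identity
\[
\|V_s a_{n_k}-V_s a_{n_j}\|_{H_{\tilde K}}^2=\langle a_{n_k}-a_{n_j},\,K(s,s)(a_{n_k}-a_{n_j})\rangle_H .
\]
By Cauchy--Schwarz this is bounded by $\big(\sup_m\|a_m\|+\|a\|\big)\,\|K(s,s)(a_{n_k}-a_{n_j})\|_H$, and since $K(s,s)$ is compact (the hypothesis with $t=s$) and $a_{n_k}-a\rightharpoonup 0$, we have $\|K(s,s)(a_{n_k}-a)\|_H\to 0$; hence the right-hand side tends to $0$, so $(V_s a_{n_k})$ is Cauchy and therefore convergent in $H_{\tilde K}$. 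This gives compactness of $V_s$, and therefore of $V_sV_s^*$; being also self-adjoint and positive semi-definite (Theorem~\ref{thm:covariance_operators}), $V_sV_s^*$ then has the usual discrete spectral decomposition.

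For the convergence assertion I would invoke the identity $V_sV_s^*\,\tilde K(\cdot,(t,a))=\tilde K(\cdot,(s,K(s,t)a))=V_s\big(K(s,t)a\big)$, which is linear and bounded in $a$ with norm at most $\|V_s\|\,\|K(s,t)\|$; consequently $a_n\to a$ in $H$ immediately yields $V_sV_s^*\,\tilde K(\cdot,(t,a_n))\to V_sV_s^*\,\tilde K(\cdot,(t,a))$ in $H_{\tilde K}$. If one wishes to allow merely weak convergence $a_n\rightharpoonup a$, the same conclusion still holds, now genuinely using that $K(s,t)$ is compact so that $K(s,t)a_n\to K(s,t)a$ in norm — this is where the full strength of the hypothesis is used rather than just the diagonal case $t=s$.

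The only delicate point is bookkeeping rather than substance: one must confirm that the $V_s^*$ entering the product $V_sV_s^*$ is the genuine bounded Hilbert-space adjoint and that the formula $V_s^*\tilde K(\cdot,(t,b))=K(s,t)b$, a priori valid only on the span of reproducing vectors, propagates to all of $H_{\tilde K}$; this is automatic because $V_s$ is bounded and that span is total. Once this is settled, transferring compactness from $K(s,s)\colon H\to H$ to $V_sV_s^*\colon H_{\tilde K}\to H_{\tilde K}$ is precisely the elementary weak-subsequence argument above, so no polar-decomposition or functional-calculus machinery is required.
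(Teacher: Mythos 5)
Your proof is correct, but it follows a genuinely different route from the paper's. The paper works entirely on the reproducing vectors: it applies the identity $V_sV_s^*\tilde K(\cdot,(t,a_n))=\tilde K(\cdot,(s,K(s,t)a_n))$ to a bounded sequence $(a_n)\subset H$, extracts a convergent subsequence of $(K(s,t)a_n)$ using compactness of $K(s,t)$, and concludes compactness of $V_sV_s^*$ from the resulting convergence of the images; the second assertion is obtained the same way with $a_n\to a$. You instead prove that $V_s$ itself is compact --- via the identity $\|V_s(a_{n_k}-a_{n_j})\|_{H_{\tilde K}}^2=\langle a_{n_k}-a_{n_j},K(s,s)(a_{n_k}-a_{n_j})\rangle_H$, a weakly convergent subsequence, and compactness of $K(s,s)$ --- and then obtain compactness of $V_sV_s^*$ as the composition of a compact operator with the bounded $V_s^*$. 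Your version buys two things. First, it establishes compactness of $V_sV_s^*$ on all of $H_{\tilde K}$: the paper's argument only tests sequences of the special form $\tilde K(\cdot,(t,a_n))$ with $(a_n)$ bounded in $H$, which is not the full unit ball of $H_{\tilde K}$ (nor even the span of kernel vectors with controlled $H_{\tilde K}$-norm), so your reduction to $V_s$ actually closes a gap rather than merely restating the paper's argument. Second, it cleanly separates the hypotheses: only compactness of the diagonal operator $K(s,s)$ is needed for the first claim, and only boundedness of $K(s,t)$ (not compactness) is needed for the displayed convergence when $a_n\to a$ in norm --- a point you correctly flag, whereas the paper invokes compactness of $K(s,t)$ where continuity suffices. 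Your remark that the adjoint formula $V_s^*\tilde K(\cdot,(t,b))=K(s,t)b$ extends from the total set of kernel vectors to all of $H_{\tilde K}$ by boundedness is the right bookkeeping and is implicit but unaddressed in the paper.
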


\begin{proof}
The assumption that \( K(s,t) \) is a compact operator implies that for any bounded sequence \( \{a_n\}_{n \in \mathbb{N}} \subset H \), the sequence \( K(s,t) a_n \) has a convergent subsequence. Now, using \cite[Theorem 3.1]{Jorgensen2024} part (3), we know that 
\[
V_s V_s^* \tilde{K}(\cdot, (t, a_n)) = \tilde{K}(\cdot, (s, K(s,t) a_n)).
\]
Since \( K(s,t) \) is compact, the sequence \( \{K(s,t) a_n\}_{n \in \mathbb{N}} \) has a convergent subsequence in \( H \), say \( K(s,t) a_{n_k} \to b \). By continuity of \( \tilde{K} \), it follows that 
\[
\tilde{K}(\cdot, (s, K(s,t) a_{n_k})) \to \tilde{K}(\cdot, (s, b)) \quad \text{in } H_{\tilde{K}}.
\]
Hence, \( V_s V_s^* \) is compact, as it maps bounded sequences to sequences with convergent subsequences in \( H_{\tilde{K}} \).

To prove the second part, let \( \{a_n\}_{n \in \mathbb{N}} \subset H \) be such that \( a_n \to a \) in \( H \). Since \( K(s,t) \) is compact, \( K(s,t) a_n \to K(s,t) a \) in \( H \). Applying the operator \( V_s V_s^* \), we have 
\[
V_s V_s^* \tilde{K}(\cdot, (t, a_n)) = \tilde{K}(\cdot, (s, K(s,t) a_n)) \to \tilde{K}(\cdot, (s, K(s,t) a)) = V_s V_s^* \tilde{K}(\cdot, (t, a)).
\]
Thus, the compactness condition is satisfied.
\end{proof}

Theorem \ref{thm:compact} is motivated by the need to establish compactness properties of operator-valued positive definite kernels and their induced covariance operators. In many applications, such as functional analysis, machine learning, and stochastic processes, compact operators play a fundamental role in spectral analysis, kernel approximation techniques, and numerical stability. By investigating the conditions under which kernel-induced operators remain compact, this theorem provides essential insights into their structure and applicability in infinite-dimensional Hilbert spaces.

Another key motivation stems from applications in Gaussian processes and kernel-based learning, where compactness of the covariance operator ensures desirable properties such as finite-rank approximations, well-posedness of learning algorithms, and efficient computation of kernel expansions. Theorem \ref{thm:compact} formalizes these ideas by establishing conditions under which the operator $V_sV_s^*$ inherits compactness properties from the kernel function $K(s, t)$.\\

\noindent
Theorem \ref{thm:compact} introduces several significant advancements in the study of operator-valued positive definite kernels. First, it rigorously extends classical compactness results from scalar-valued kernels to the operator-valued setting, bridging the gap between functional analysis and modern kernel-based methods in applied mathematics.

Second, the theorem provides a precise characterization of the compactness condition by linking it to the decay properties of the kernel function $K(s, t)$. This result is particularly useful in practical applications, where kernel decay behavior directly influences the spectral properties of covariance operators and the efficiency of numerical approximations.

Finally, the theorem offers insights into the asymptotic behavior of kernel-induced feature maps in reproducing kernel Hilbert spaces (RKHS). By proving that the compactness of $K(s, t)$ ensures the compactness of $V_sV_s^*$, it establishes a foundational result for studying spectral decompositions, regularization techniques, and low-rank approximations in infinite-dimensional spaces. These contributions significantly enhance the theoretical understanding and practical implementation of operator-valued kernel methods.\\

\noindent
The compact operator assumption brings to light an interesting property of the operator family $\{V_s\}_{s\in S}$ . Compact operators are often viewed as "small" or "finite-dimensional" in some sense, even though they act on infinite-dimensional spaces. This extension reveals how the projection operators 
$V_sV^*_s$ inherit this compactness. Through this lens, the theorem shows that even in large, infinite-dimensional spaces, certain structures can behave as though they are confined to more manageable, finite regions. The result is a more refined understanding of how these operators operate—especially in cases where the kernel introduces compactness into the system.

This opens new avenues for studying how these operators might behave under perturbations, or when analyzing their spectrum, thus enriching the broader narrative of functional analysis.

The following examples illustrate how compact operators manifest in finite-dimensional matrix spaces, showcasing the compactness property of the extended theorem in concrete settings. The exponential and rational decay of the kernel functions $K(s,t)$ plays a crucial role in establishing the compactness of the operators $V_sV^*_s$.

\begin{example}

Consider the Hilbert space \( H = \mathbb{C}^3 \), and define a kernel matrix \( K(s,t) \) as follows:
\[
K(s,t) = \begin{pmatrix}
1 & 0 & 0 \\
0 & e^{-|s - t|} & 0 \\
0 & 0 & e^{-|s - t|^2}
\end{pmatrix}.
\]
This defines a family of operators \( \{V_s\}_{s \in S} \), where \( V_s: \mathbb{C}^3 \to H_{\tilde{K}} \) is the operator defined by the action of \( K(s,t) \). Notice that the matrix \( K(s,t) \) is diagonal, with entries that decay exponentially in \( |s - t| \), making \( K(s,t) \) a compact operator, since the off-diagonal elements become arbitrarily small as \( |s - t| \to \infty \).

In this case, the operator \( V_s V_s^*: H_{\tilde{K}} \to H_{\tilde{K}} \) is compact, and acts on a vector \( a = (a_1, a_2, a_3) \in H \) as:
\[
V_s V_s^* \tilde{K}(\cdot, (t, a)) = \tilde{K}(\cdot, (s, K(s,t) a)),
\]
which, in matrix form, becomes:
\[
V_s V_s^* \tilde{K}(\cdot, (t, a)) = \tilde{K}(\cdot, (s, (a_1, e^{-|s - t|} a_2, e^{-|s - t|^2} a_3))).
\]
Since the entries involving \( a_2 \) and \( a_3 \) decay exponentially, this shows that \( V_s V_s^* \) is compact in the matrix space \( \mathbb{C}^3 \).
\end{example}

\begin{example}
Now, consider \( H = \mathbb{R}^2 \), and define a symmetric kernel matrix:
\[
K(s,t) = \begin{pmatrix}
\frac{1}{1 + |s - t|} & \frac{1}{1 + |s - t|^2} \\
\frac{1}{1 + |s - t|^2} & \frac{1}{1 + |s - t|}
\end{pmatrix}.
\]
Here, \( K(s,t) \) defines a symmetric and compact operator, as its entries tend to zero as \( |s - t| \to \infty \). The family of operators \( \{V_s\}_{s \in S} \) is now defined similarly, and the action of \( V_s V_s^* \) on a vector \( a = (a_1, a_2) \in H \) becomes:
\[
V_s V_s^* \tilde{K}(\cdot, (t, a)) = \tilde{K}(\cdot, (s, K(s,t) a)),
\]
which results in:
\[
V_s V_s^* \tilde{K}(\cdot, (t, a)) = \tilde{K}(\cdot, (s, \left( \frac{a_1}{1 + |s - t|} + \frac{a_2}{1 + |s - t|^2}, \frac{a_1}{1 + |s - t|^2} + \frac{a_2}{1 + |s - t|} \right))).
\]
Since all the entries of the matrix \( K(s,t) \) decay as \( |s - t| \to \infty \), the operator \( V_s V_s^* \) is compact. Moreover, as \( a_n \to a \) in \( \mathbb{R}^2 \), the corresponding sequences \( V_s V_s^* \tilde{K}(\cdot, (t, a_n)) \) converge in \( H_{\tilde{K}} \).
\end{example}

\section{Conclusion}
In this work, we presented a novel and comprehensive framework for analyzing operator-valued positive definite kernels and their applications in operator theory and stochastic processes. By extending the classical theory of reproducing kernel Hilbert spaces (RKHS) to encompass operator-valued kernels, we provided new insights into the dilation constructions and covariance structures within Hilbert space-valued Gaussian processes. 

Key results included the characterization of positive definite kernels mapping into bounded operator spaces and the exploration of their role in constructing universal RKHS representations. We also extended these results to more general contexts, such as vector-valued RKHS and kernels defined over continuous and topological domains. These extensions enable the application of the developed theory to diverse areas of functional analysis, stochastic processes, and machine learning.

Furthermore, the study of covariance operators induced by kernels provided a deeper understanding of their self-adjointness, positive semi-definiteness, and compactness properties. These findings bridge the gap between kernel-based methods and functional analysis, opening avenues for new applications in statistical learning and operator theory.

Overall, this work establishes a robust foundation for future research on operator-valued kernels and their applications, paving the way for innovative solutions to problems in mathematical modeling, stochastic analysis, and data science.

\section{Open Questions and Future Directions}

The exploration of operator-valued positive definite kernels has unveiled numerous opportunities for further research. Below, we outline several open questions and future research directions inspired by the findings of this work:

\begin{question} While this work primarily focuses on kernels in the context of Hilbert spaces, a natural extension would be to investigate positive definite kernels associated with Banach spaces or other topological vector spaces. What challenges arise in defining and characterizing reproducing properties in such generalized contexts?
\end{question}
\begin{question} Many practical applications involve time-evolving or dynamic processes. How can operator-valued kernels be adapted to model time-varying systems, and what implications does this have for constructing time-dependent RKHS?
\end{question}
\begin{question} Operator-valued kernels have shown promise in structured data analysis. Can these kernels be further developed for modern machine learning tasks, such as deep learning architectures, graph neural networks, or kernel-based reinforcement learning?
\end{question}
\begin{question} The spectral properties of covariance operators induced by operator-valued kernels are central to many applications. How can a deeper spectral analysis be used to optimize kernel designs or improve computational efficiency in high-dimensional settings?
\end{question}
\begin{question} This work briefly connects operator-valued kernels to Hilbert space-valued Gaussian processes. How can these kernels be further integrated with stochastic differential equations or processes with operator-valued random variables?
\end{question}
\begin{question}
Many real-world applications involve non-Euclidean data, such as manifolds or hyperbolic spaces. Can operator-valued kernels be effectively defined and utilized in these domains? What are the implications for geometry-aware machine learning and analysis?
\end{question}
\begin{question}
The practical implementation of operator-valued kernel methods requires efficient computational techniques. What new numerical methods or algorithms can be developed to handle the complexities of operator-valued kernel evaluations and matrix operations?
\end{question}
\begin{question} 
Operator-valued kernels naturally connect to quantum mechanics and infinite-dimensional systems. What are the potential applications of these kernels in quantum computing or in modeling infinite-dimensional dynamics?
\end{question}

We hope these questions inspire further exploration and development in the study of operator-valued kernels and their diverse applications across mathematics, physics, and data science.

\end{document}